\tikzset{cross/.style={cross out, draw, 
minimum size=2*(#1-\pgflinewidth), 
inner sep=0pt, outer sep=0pt}}
\newtheorem{th.}{Theorem}[section]
\newtheorem{thm}[th.]{Theorem}
\newtheorem{cnj}[th.]{Conjecture}
\newtheorem{lemma}[th.]{Lemma} 
\newtheorem{prop.}[th.]{Proposition}
\newtheorem{prp}[th.]{Proposition}
\newtheorem{cor.}[th.]{Corollary}
\newtheorem*{rem.}{Remark}
\newcommand{\norm}[1]{\lVert#1\rVert}   % norme
\newcommand{\QQ}{\overline{\mathbb{Q}}}
\newcommand{\Q}{\mathbb{Q}}
\newcommand{\N}{\mathbb{N}}
\newcommand{\Z}{\mathbb{Z}}
\newcommand{\C}{\mathbb{C}}
\newcommand{\R}{\mathbb{R}}
\newcommand{\NN}{\mathscr{N}}
\newcommand{\HH}{\mathcal{H}}
\newcommand{\Su}{\mathscr{S}}
\newcommand{\G}{\mathcal{G}}
\newcommand{\eps}{\varepsilon}
\newcommand{\z}{\underline{z}}
\newcommand{\kt}{\mathfrak{t}}
\newcommand{\kg}{\mathfrak{g}}
\newcommand{\kh}{\mathfrak{h}}
\newcommand {\cO} {{\mathcal O}}
\renewcommand{\a}{\alpha}
\renewcommand{\d}{\delta}
\newcommand{\e}{\varepsilon}
\newcommand{\wt}{\widetilde}
\renewcommand{\a}{\alpha}
\renewcommand{\d}{\delta}
\renewcommand{\L}{\Lambda}
\DeclareMathOperator{\GL}{GL}
\DeclareMathOperator{\dist}{dist}
\DeclareMathOperator{\Aut}{Aut}
\title{Exponential multiple mixing for commuting automorphisms of a nilmanifold}
\author{Timoth\'ee B\'enard and P\'eter P. Varj\'u\thanks{
The authors have received funding from the European Research
Council (ERC) under the European Union's Horizon 2020 research and
innovation programme (grant agreement No. 803711).
PV was supported by the Royal Society.
}}
\date{}
\begin{document}
	
\large

\maketitle

\renewcommand{\thefootnote}{\fnsymbol{footnote}} 
\footnotetext{\textit{Key words and phrases.} Exponential mixing, multiple mixing, nilmanifold automorphisms, Schmidt's subspace theorem, unit equations. }    
\footnotetext{2020 \textit{Mathematics Subject Classification.} Primary 37A25; Secondary 22D40, 11J87.}    
\renewcommand{\thefootnote}{\arabic{footnote}}

{\abstract  Let $l\in \N_{\ge 1}$  and $\alpha : \Z^l\rightarrow  \text{Aut}(\NN)$ be an action of $\Z^l$ by automorphisms on a compact nilmanifold $\NN$. We assume the action of every $\alpha(z)$ is ergodic for $z\in \Z^l\smallsetminus\{0\}$ and show that $\alpha$ satisfies exponential $n$-mixing  for any integer $n\geq 2$. This extends results of Gorodnik
and Spatzier [Acta Math., 215 (2015)].}

\section{Introduction}

Building on the work of Gorodnik and Spatzier \cite{GorSpat14}, \cite{GorSpat15},
the goal of this paper is to show multiple mixing with exponential rate for actions of commuting ergodic automorphisms of a compact nilmanifold. We start by recalling the basic notions involved.

A \emph{compact nilmanifold} is a quotient $\NN=G/\Lambda$ where $G$ is a nilpotent, connected, simply connected real Lie group and $\Lambda$ is a cocompact discrete subgroup of $G$. It carries a unique $G$-invariant probability measure that we denote by $m$ and call the \emph{Haar measure} on $\NN$.
The  \emph{group of automorphisms} of $\NN$ is defined by 
 \[
 \text{Aut}(\NN)=\{\psi \in \text{Aut}(G),\, \psi(\Lambda)=\Lambda\}
 \]
  and acts by measure-preserving transformations on $(\NN, m)$.

This article deals with the mixing properties of a finite family of commuting automorphisms of $\NN$, realized as a morphism $\alpha: \Z^l\rightarrow \Aut(\NN)$. Given $n\geq2$, we say that $\alpha$ is  \emph{$n$-mixing} if  for every bounded measurable functions $f_{1}, \dots, f_{n} : \NN\rightarrow \R$ and for $z_{1}, \dots, z_{n}\in \Z^l$,  we have 
\[
\int_{\NN}\prod_{i=1}^n f_{i}(\alpha(z_{i})x)\,dm(x) \,\,\longrightarrow \,\,\prod_{i=1}^n \int_{\NN} f_{i}(x)\,dm(x)
\]
as $\min_{i\neq j} \norm{z_{i}-z_{j}}\to+\infty$.

Note that the case where $n=2$ corresponds to the usual notion of mixing, and also that we do not lose generality by considering smooth test functions.

Our aim is to estimate the rate of mixing of a finite family of ergodic commuting automorphisms of $\NN$.  To do so, we need to restrict our attention to a set of regular test functions. We fix an arbitrary Riemannian metric on $\NN$, and for $\theta\in (0,1]$,  denote by $\HH^{\theta}(\NN)$ the space of $\theta$-H\"older functions $f$ on $\NN$ endowed with the norm 
\[
\norm{f}_{\HH^{\theta}}=\sup_{\NN}|f|\,+\,\sup_{x\neq y\in \NN}\frac{|f(x)-f(y)|}{d(x,y)^\theta}.
\]

\bigskip
We state our main result.

\begin{thm}[Multiple mixing with exponential rate]\label{TH}
Let $\alpha :\Z^l \rightarrow \emph{Aut}(\NN)$ be an action of $\Z^l$ ($l\geq1$) by automorphisms on a  compact nilmanifold $\NN=G/\Lambda$.  Assume $\alpha(z)$ acts ergodically on $(\NN, m)$ for all nonzero  $z\in \Z^l$. 

Then for every $n\geq 2$, $\theta\in (0,1]$, there is an effective constant $\eta>0$
and an ineffective constant $C>0$ such that for all $\theta$-H\"older functions $f_{1}, \dots, f_{n}\in \HH^{\theta}(\NN)$, translation parameters $g_{1}, \dots, g_{n}\in G$, and $\underline{z}=(z_{1}, \dots, z_{n})\in (\Z^l)^{n}$ one has 
\[
\Big|\int_{\NN}\prod_{i=1}^n f_{i}(g_{i}\alpha(z_{i})x) \,dm(x)\,\, -\,\, \prod_{i=1}^n \int_{\NN}f_{i}(x) \,dm(x)\Big|   \,\,\leq \,\, \frac{C}{N(\underline{z})^{\eta}} \prod_{i=1}^n \norm{f_{i}}_{\HH^\theta} 
\]
where   $N(\underline{z})=\exp (\min_{i\neq j}\norm{z_{i}-z_{j}})$.
\end{thm}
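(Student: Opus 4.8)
The plan is to reduce the statement, via a sequence of structural reductions, to a problem about counting solutions of linear equations among eigenvalues of the automorphisms acting on a tower of abelian quotients, and then to control that counting problem using Schmidt's subspace theorem. The first step is to use the lower central series of $G$ to set up an induction on the nilpotency step: quotienting $G$ by the last nontrivial term of the central series gives a nilmanifold of smaller step on which the action descends, and the fibers are tori. Using a Fourier decomposition along these central torus fibers, one reduces the general correlation integral to two cases: functions that are invariant under the central fiber direction (handled by the inductive hypothesis on the quotient nilmanifold), and functions whose fiberwise Fourier modes are non-trivial. The translation parameters $g_i\in G$ can be absorbed because translation by $g$ followed by an automorphism $\alpha(z_i)$ differs from $\alpha(z_i)$ followed by a (bounded, $z_i$-independent up to the automorphism) translation; one tracks these carefully so that the Hölder norms are not inflated.

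Second, for the genuinely "non-abelian-mode" part, I would pass to the universal abelian cover / the Lie algebra $\mathfrak g$, where $\alpha$ acts by commuting integer matrices; after complexification and simultaneous triangularization, everything is governed by the joint eigenvalues $\lambda_1(\cdot),\dots$, which are algebraic numbers whose valuations vary with $z\in\Z^l$ linearly in the exponents. The oscillatory integral over $\NN$ of $\prod f_i(g_i\alpha(z_i)x)$, after Fourier expansion of each $f_i$ on the relevant abelian quotient, becomes a sum over frequency tuples $(\xi_1,\dots,\xi_n)$ of coefficients $\widehat{f_1}(\xi_1)\cdots\widehat{f_n}(\xi_n)$ times an indicator that a certain $\Z$-linear relation $\sum_i {}^t\!\alpha(z_i)\xi_i = 0$ holds in the dual. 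The key point is that such a relation forces a multiplicative relation among the eigenvalues of the $\alpha(z_i)$, i.e.\ a unit equation $\sum u_k = 0$ in a finitely generated multiplicative group, and Schmidt's subspace theorem (via the Evertse–Schlickewei–Schmidt bound on the number of non-degenerate solutions, together with a height/gap argument for the degenerate ones) bounds the number of frequency tuples that can contribute at scale $N(\underline z)$ and shows that the surviving contributions come with exponentially small weight $N(\underline z)^{-\eta}$. Ergodicity of each $\alpha(z)$, $z\neq0$, is exactly what guarantees no eigenvalue is a root of unity, so that these multiplicative groups are genuinely infinite-rank-free in the relevant sense and the gaps between distinct valuation vectors grow linearly in $\min_{i\neq j}\|z_i - z_j\|$.

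Third, I would convert the Fourier-side bound into a Hölder-side bound: a $\theta$-Hölder function has Fourier coefficients decaying like $\|\xi\|^{-\theta'}$ after smoothing, so the contribution of high-frequency tuples is summable and small, while only finitely many low-frequency tuples remain, each killed by the valuation-gap estimate once $N(\underline z)$ is large; summing a geometric series in the frequency scale produces the final bound $C N(\underline z)^{-\eta}\prod\|f_i\|_{\HH^\theta}$. The effective exponent $\eta$ comes from the linear valuation gaps and is explicit; the constant $C$ is ineffective precisely because the Evertse–Schlickewei–Schmidt bound on the number of solutions, while uniform, does not come with an effective bound on their heights, so one cannot effectively locate the finitely many exceptional frequency tuples.

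The main obstacle, I expect, is the last point made precise: organizing the reduction so that the subspace theorem is applied uniformly over all $\underline z$ and all frequency tuples at once, i.e.\ extracting from a single application of the Evertse–Schlickewei–Schmidt theorem a bound on the number of "bad" linear relations that is independent of $\underline z$, and then showing that every bad relation not accounted for by this finite list must involve eigenvalue-valuation vectors whose pairwise differences are bounded below linearly in $\min_{i\neq j}\|z_i-z_j\|$. Handling the $n$-fold case (rather than just triple correlations) compounds this: the relevant unit equations have $n$ terms and one must carefully partition into vanishing subsums, apply the theorem to each non-degenerate piece, and recurse on the degenerate pieces, all while keeping the bookkeeping uniform. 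The nilpotent induction and the absorption of the translations $g_i$ are, by comparison, routine once the abelian case is in hand.
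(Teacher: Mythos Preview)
Your approach differs substantially from the paper's. The paper does \emph{not} induct on the nilpotency step or perform any Fourier decomposition along central fibers. Instead it rewrites the correlation as $\int_{\Su} f_1\otimes\cdots\otimes f_n\,dm_{\Su}$ for the diagonal-type affine rational submanifold $\Su\subset\NN^n$ given by $x\mapsto(g_1\alpha(z_1)x,\ldots,g_n\alpha(z_n)x)$, and then applies Green--Tao's quantitative equidistribution theorem \emph{once} on the product nilmanifold $\NN^n$. That result (Proposition~\ref{equid}) reduces failure of $\delta$-equidistribution directly to the existence of a single integer vector $q\in(\Z^d)^n$ with $\|q\|<\delta^{-L}$ orthogonal to the image of the tangent algebra in the abelianization $\kt^n$. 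All of the nilpotent structure is absorbed into the Green--Tao black box; only the abelianized action matters downstream, and the translations $g_i$ disappear for free since the diophantine obstruction depends only on $\kh$, not on the affine shift.

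The diophantine input is also different in kind, and here your formulation has a gap. You propose to \emph{count} frequency tuples via the Evertse--Schlickewei--Schmidt bound on solutions of $\sum u_k=0$ in a finitely generated multiplicative group; but the frequencies $\xi_i$ are lattice points, not units, so the relation $\sum_i {}^t\alpha(z_i)\xi_i=0$ is, after diagonalizing, a unit equation \emph{with algebraic-integer coefficients} $\sum_i \chi_0(z_i)\langle q_i,\omega_0\rangle=0$, and what is needed is not a count but a \emph{height lower bound} on those coefficients. The paper supplies exactly this (Theorem~\ref{unit-eq}): if $a_1u_1+\cdots+a_nu_n=0$ with $u_i\in\cO(K)^\times$ and $a_i\in\cO(K)_h$ not all zero, then $h\ge r\,\a(u)^{1-\varepsilon}$. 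Ergodicity gives $\a(u)\ge N(\underline z)^{c'}$ via exponential growth of eigencharacters (Lemma~\ref{exp}), which forces $\|q\|$ large and hence $\delta\lesssim N(\underline z)^{-\eta}$. The recursion over vanishing subsums you anticipate does occur, but it lives entirely inside the short inductive proof of Theorem~\ref{unit-eq} (subspace theorem plus induction on $n$), cleanly decoupled from the dynamics; there is no need to organize a uniform application over all $\underline z$ and frequency scales as you feared.
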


In other words, a finite family of commuting ergodic automorphisms of a compact nilmanifold satisfies multiple mixing with exponential rate in the class of $\theta$-H\"older functions (and their $G$-translates). In particular, we also obtain exponential multiple mixing for actions of commuting \emph{affine} automorphisms. 

We explain what effectivity or ineffectivity of the constants mean in Section
\ref{sc:effective} below.

\subsection{Earlier results}

Theorem \ref{TH} for mixing without a rate, that is the statement that 
ergodicity of $\alpha(z)$ for every $z\in \Z^l\smallsetminus\{0\}$
implies that $\alpha$ is $n$-mixing for all $n\geq 2$,
was known before.
It is due to Parry (1969 \cite{Parry69}) for $l=1$, K. Schmidt-Ward (1993 \cite{SchmWard93})  in the case where   $\NN$ is a torus and $l\geq2$, and Gorodnik-Spatzier (2015 \cite{GorSpat15}) in general. 

There are also many prior results on exponential mixing.
The case of Theorem \ref{TH} where $\NN$ is a torus and $l=1$ is due to  Lind (1982 \cite{Lind82}) for $n=2$, and to  Dolgopyat (2004 \cite{Dolg04}) for general $n$. Note that some difficulty arises from the fact that an ergodic automorphism of a torus is not necessarily hyperbolic: it can have eigenvalues of modulus 1. Still for a torus but now general $l$,  Miles-Ward (2011 \cite{MilWard11}) prove directional uniformity in the --a priori non-exponential-- rate of $2$-mixing under some entropic conditions. For general $\NN$ and  $l\geq1$, the case where $n\le 3$ is established by Gorodnik and Spatzier  in 2015  \cite{GorSpat14, GorSpat15}. They also examine the case of higher order mixing, i.e. $n\geq4$, but only obtain partial results and  mention serious difficulties to reach full generality. Those difficulties consist in a fine understanding of the size of solutions of some diophantine inequalities.

Theorem \ref{TH} is new for $n\ge 4$, even for tori.

\subsection{Effectivity of the constants}\label{sc:effective}

The constant $\eta$ is \emph{effective}.
This means that by following the arguments in this paper and its references, one could
determine an explicit value for $\eta$ such that the theorem holds (for some $C$).
In contrast, the constant $C$ is \emph{ineffective}.
This means that the proof of the theorem only shows that there exists a finite (and positive)
value of $C$ for which the theorem holds; however, we do not have any means of determining such
a value.

This ineffectivity is caused by our reliance on W. Schmidt's subspace theorem.
In the cases $n\le 3$, this tool can be replaced by the theory of linear forms in
logarithms, as it is done by Gorodnik and Spatzier in \cite{GorSpat15} and one can
obtain the theorem with effective constants.
We note that, as they are written, the arguments in \cite{GorSpat15}
appeal to \cite[Theorem 7.3.2]{BomGub07} in several places (see pp. 139--140),
which relies on the subspace theorem making the resulting constants ineffective.
However, the application of \cite[Theorem 7.3.2]{BomGub07} could be replaced by
\cite[Proposition 14.13]{Mas}
at the
expense of adjusting the exponents in \cite{GorSpat15}.

\subsection{Motivation}

Katok and Spatzier have made a conjecture about rigidity of higher rank abelian
Anosov actions on compact manifolds, which can be stated somewhat informally as follows.

\begin{cnj}\label{cn:KS}
	When $l\ge 2$, all irreducible Anosov genuine $\Z^l$-actions on compact manifolds
	are $C^\infty$-conjugate to actions on infranilmanifolds by affine automorphisms.
\end{cnj}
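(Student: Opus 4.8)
I should be upfront: Conjecture~\ref{cn:KS} is a genuinely open problem (the Katok--Spatzier rigidity conjecture for higher-rank abelian Anosov actions), so what follows is not a complete proof but a plan outlining the strategy one would pursue, together with the point at which the argument currently cannot be pushed through.

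The plan is to exploit the higher rank hypothesis $l\ge 2$ via the standard two-step philosophy of smooth rigidity: first establish a \emph{topological/measurable classification} (the action is an algebraic action on an infranilmanifold), then \emph{bootstrap regularity} of the conjugacy from measurable/H\"older to $C^\infty$. For the first step, I would start from the Franks--Manning picture: a single Anosov diffeomorphism is topologically conjugate to an affine infranilmanifold automorphism provided the universal cover is nilpotent; for a genuine $\Z^l$-action one uses the commuting partners to control the asymptotic geometry. Concretely, one analyzes the coarse Lyapunov foliations of the action --- the nontrivial intersections of stable foliations of the various $\alpha(z)$ --- and uses the irreducibility assumption (no nontrivial rank-one algebraic factor) to show these foliations are jointly integrable with the expected dimensions and that the holonomies are affine. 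The presence of many commuting Anosov elements forces the periodic data to be constant along coarse Lyapunov leaves, which is what eventually pins down the linearization. This reduces matters to showing that the universal cover of the manifold, equipped with the lifted action, is a nilpotent Lie group on which $\Z^l$ acts by automorphisms --- an infranilmanifold structure.

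For the second step, once one has a bi-H\"older conjugacy $h$ between the given action and an affine algebraic model, I would upgrade its regularity by the usual non-stationary normal forms / Katok--Spatzier machinery: along each coarse Lyapunov foliation the conjugacy intertwines two hyperbolic-in-that-direction actions, and invariance under the \emph{other}, transversally hyperbolic, commuting elements forces $h$ to be smooth along the leaves (this is where higher rank is essential --- for a single diffeomorphism no such argument exists). Running this over all coarse Lyapunov directions and invoking a Journ\'e-type lemma (smoothness along two transverse foliations with uniformly smooth holonomies implies joint smoothness) yields $h\in C^\infty$, giving the $C^\infty$-conjugacy in the conjecture.

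The hard part --- and the reason the conjecture remains open --- is the very first reduction: showing that the manifold must be an infranilmanifold and that the action is affine, \emph{without} assuming this a priori. For $\Z^l$-actions with enough hyperbolicity one can often produce an invariant affine (projective) structure on coarse Lyapunov leaves, but assembling these into a global nilmanifold structure, and ruling out exotic smooth Anosov actions on non-infranilmanifolds, requires control that is not currently available in full generality (it is known under additional hypotheses, e.g. Cartan actions, actions with enough rank and codimension-one Lyapunov distributions, or when the model is a torus). So while the regularity-bootstrap half of the argument is essentially understood, the topological-classification half is the obstacle, and a full proof of Conjecture~\ref{cn:KS} would need a new idea precisely there.
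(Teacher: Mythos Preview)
Your instinct is correct: this is a genuine open conjecture, and the paper does \emph{not} prove it. Conjecture~\ref{cn:KS} appears in the paper purely as motivation for studying exponential mixing of nilmanifold automorphisms; there is no proof in the paper to compare your proposal against. So you were right to flag at the outset that a complete proof is not available.

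Your outline of the expected strategy (Franks--Manning topological conjugacy, coarse Lyapunov foliations, non-stationary normal forms, Journ\'e-type regularity bootstrap) is a fair summary of the machinery used in the partial results toward this conjecture (e.g., the work of Fisher--Kalinin--Spatzier and Rodriguez~Hertz--Wang cited in the paper), and you correctly locate the obstruction in the topological classification step rather than in the regularity upgrade. That said, since the statement is labeled as a conjecture and not a theorem, no proof is expected here; the appropriate response is simply to note that it is open and serves as context for why Theorem~\ref{TH} is of interest.
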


We do not define all the terms appearing in the above conjecture, but we
comment on them briefly.
An Anosov diffeomorphism of a compact manifold is a diffeomorphism such that the tangent
bundle can be split as the sum of two invariant subbundles, with one subbundle that is
exponentially contracting and one that is exponentially expanding under the action.
A $\Z^l$-action is Anosov, if it contains an Anosov diffeomorphism.

The conjecture is false for rank $1$ actions, see \cite[Section 1.2]{RHW} for
a simple example or \cite{FJ} for a more elaborate one.
These can be modified to obtain some higher rank examples, e.g., actions on manifolds
that have quotients on which the action factors through a rank $1$ action.
The adjective genuine is meant to exclude examples like this, but we do not
give a precise definition.

An infranilmanifold is a compact manifold that is finitely covered by a nilmanifold.

Conjecture \ref{cn:KS} motivates Theorem \ref{TH} for two reasons.
First, if the conjecture is true, it implies that actions by affine
automorphisms on (infra)nilmani\-folds are the only examples of higher
rank abelian Anosov actions in some sense, which demonstrates the importance of this
class of dynamical systems.
Second, some recent progress by Fisher, Kalinin and Spatzier \cite{FKS2}
and by Rodriguez Hertz and Wang \cite{RHW}
towards Conjecture \ref{cn:KS} relies on exponential mixing of actions by automorphisms.
We note, however, that these applications require results about $2$-mixing only,
which is already covered in \cite{GorSpat15}.

We refer to \cite{RHW}, \cite{FKS1} and their references for more information on higher rank abelian
Anosov actions.

\subsection{An outline of the paper}

Our approach  is inspired by the papers of Gorodnik and Spatzier \cite{GorSpat14, GorSpat15}. The argument we propose is, however, simpler.

The mixing estimate in Theorem \ref{TH} can be recast as a problem about quantitative
equidistribution of a certain affine subnilmanifold $\Su$ in the product nilmanifold $\NN^n$. In Section \ref{Sec2}, we use a result by Green and Tao \cite[Theorem 1.16]{GreenTao12} to reduce quantitative equidistribution of $\Su$ to a diophantine condition on its Lie algebra. This condition is related to \cite[Theorem 2.3]{GorSpat15} though our formulation is simpler, and easier to prove.

The diophantine condition arising in  Section \ref{Sec2} requires
bounding the solutions of certain generalized unit equations.
This problem is studied in  Section \ref{Sec3}  using W. Schmidt's subspace theorem.
It is related to \cite[Proposition 3.1]{GorSpat15}, but  we prove a more
uniform statement, which is needed to establish the exponential
mixing rate.
Our proof is based on that of a very closely related result of Evertse \cite{Eve}.

In order to apply the results of Section \ref{Sec3} to the diophantine equations governing the equidistribution of $\Su$ in $\NN^n$, we need to estimate the
growth of the eigencharacters of $(\a(z))_{z\in \Z^l}$ acting on the abelianized Lie algebra $\kg/[\kg,\kg]$.
We do this in Section \ref{Sec1} using the ergodicity assumption.

Section \ref{Sec4} concludes the paper with the proof of Theorem \ref{TH}
combining the ingredients worked out in the previous three sections.

\subsection*{Acknowledgement}
We thank the anonymous referee for helpful comments.

\section{Equidistribution of rational submanifolds}
\label{Sec2}

Let $\NN=G/\Lambda$ be a Riemannian compact nilmanifold and $m$ its Haar probability measure.  An \emph{affine rational submanifold} $\mathscr{S}\subseteq \NN$ is a quotient $gH/(H\cap \Lambda)$ where $H$ is a connected closed subgroup of $G$ intersecting $\Lambda$ cocompactly, and $g\in G$ is a translation parameter. It carries a unique $gHg^{-1}$-invariant probability measure, denoted by $m_{\Su}$. 

The goal of the section is to reduce quantitative equidistribution of $(\Su, m_{\Su})$ in $(\NN, m)$ to a diophantine condition on the Lie algebra of $H$.

We call $\kh \subseteq \kg$ the respective Lie algebras of $H\subseteq G$ and write $\kt=\kg/[\kg, \kg]$ for the largest abelian quotient of $\kg$. The projection map from $\kg$ to $\kt$ is denoted by $\kg \rightarrow \kt, w\mapsto \overline{w}$ and  sends $\log \Lambda$ to a lattice $Z_{\Lambda}=\overline{\log \Lambda}$ in $\kt$. For future reference, we fix a basis of the latter, which leads to  identifications $\kt\equiv \R^d$, $Z_{\Lambda}\equiv \Z^d$.

\begin{prop.}[Equidistribution of  rational submanifolds] 
\label{equid}
Let $\NN=G/\Lambda$ be a Riemannian compact nilmanifold, and let $\theta\in (0,1]$. There exists $L>0$ such that for every $\delta\in (0,1/2)$, any  affine rational submanifold $\mathscr{S}\subseteq \NN$ satisfying
\[
\min \{\norm{q} \,:\, q\in \Z^d\setminus \{0\}, \,\langle q, \overline{\kh}\rangle =0\} \geq \left(\frac{1}{\delta}\right)^L
\]
is $\delta$-equidistributed with respect to $\HH^\theta(\NN)$, that is,  
\[
\Big|\int_{\Su} f dm_{\Su} -  \int_{\NN}f \,dm\Big|  \, \leq\,  \delta \norm{f}_{\HH^\theta}
\]
for all $f\in \HH^\theta(\NN)$.
\end{prop.}

We deduce this proposition from \Cref{th:GreenTao}, which is itself a direct corollary of
a much stronger result of Green and Tao, \cite[Theorem 1.16]{GreenTao12}.

\begin{thm}
\label{th:GreenTao}
Let $\NN=G/\Lambda$ be a Riemannian compact nilmanifold.
Then there is a constant $L>0$ such that, for every $\d\in(0,1/2)$, $g\in G$ and  $w\in \kg$, at least one of the following two statements is true.
\begin{enumerate}
\item The sequence $(g\exp(kw)\L)_{1\le k\le n}$ is $\d$-equidistributed
with respect to $\HH^1(\NN)$ for all sufficiently
large $n$.
That is to say, for all sufficiently large $n$, for all $f\in \HH^1(\NN)$, we have
\[
\Big|\frac{1}{n}\sum_{k=1}^{n}f(g\exp(kw)\Lambda) -  \int_{\NN}f dm\Big|   \leq  \delta \norm{f}_{\HH^1}.
\]

\item There is $q\in \Z^d\backslash \{0\}$ with $\|q\|< \d^{-L}$ and 
$\langle q,\overline w\rangle\in\Z$.
\end{enumerate}
\end{thm}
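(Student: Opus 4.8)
The plan is to obtain Theorem \ref{th:GreenTao} directly from the quantitative equidistribution theorem of Green and Tao, \cite[Theorem 1.16]{GreenTao12}, applied to the degree-one polynomial sequence $k\mapsto g\exp(kw)$ on $\NN=G/\L$. First I would fix, once and for all, a filtration on $G$ (its lower central series) together with a Mal'cev basis adapted to it and to $\L$; every implied constant below then depends only on these fixed data, i.e. only on $\NN$. The next preliminary step is a dictionary between the objects of \cite{GreenTao12} and the data of Theorem \ref{th:GreenTao}. A horizontal character is a continuous homomorphism $\eta:G\to\R/\Z$ trivial on $\L$; being valued in an abelian group it is trivial on $[G,G]$, hence factors through $G/[G,G]\L$, which via $\log$ is the torus $\kt/Z_\Lambda\equiv\R^d/\Z^d$. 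Thus each horizontal character has the form $\eta(x)=\langle q,\overline{\log x}\rangle\bmod 1$ for a unique $q=q(\eta)\in\Z^d$, and its modulus $|\eta|$ in the sense of \cite{GreenTao12} is comparable to $\norm q$ up to multiplicative constants depending only on the fixed data. Also, since $\NN$ is compact, the Lipschitz norm relative to the Mal'cev metric and the norm $\norm{\cdot}_{\HH^1}$ relative to our fixed Riemannian metric are equivalent up to a constant $K=K(\NN)$, so that $(\d/K)$-equidistribution in the sense of \cite{GreenTao12} implies $\d$-equidistribution with respect to $\HH^1(\NN)$.

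With this dictionary, the argument I envisage proceeds by showing that if statement $(2)$ fails then statement $(1)$ holds. So suppose $\langle q,\overline w\rangle\notin\Z$ for every $q\in\Z^d$ with $0<\norm q<\d^{-L}$, the value of $L$ (depending only on $\NN$) to be fixed at the end. The set $Q:=\{q\in\Z^d:0<\norm q<\d^{-L}\}$ is nonempty, since $\d^{-L}>1$; being finite, with $\norm{\langle q,\overline w\rangle}_{\R/\Z}>0$ for each $q\in Q$, the quantity
\[
\e_0:=\min_{q\in Q}\norm{\langle q,\overline w\rangle}_{\R/\Z}
\]
is a strictly positive real, depending on $w$ and $\d$ but not on $g$. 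I then claim $(1)$ holds, with the implied threshold allowed to depend on $\NN$, $\d$ and $w$. Indeed, if $(g\exp(kw)\L)_{1\le k\le n}$ is not $\d$-equidistributed with respect to $\HH^1(\NN)$, then applying \cite[Theorem 1.16]{GreenTao12} with parameter $\d/K$ to $P:k\mapsto g\exp(kw)$ yields a horizontal character $\eta$ with $0<|\eta|\le C(\NN)(\d/K)^{-A(\NN)}$ and $\norm{\eta\circ P}_{C^\infty[n]}\le C(\NN)(\d/K)^{-A(\NN)}$, the constants $A(\NN),C(\NN)$ depending only on $\NN$ and the fixed data. Writing $q:=q(\eta)\in\Z^d\setminus\{0\}$, one computes $\eta(P(k))=\eta(g)+k\langle q,\overline w\rangle\bmod 1$, an affine function of $k$ whose $C^\infty[n]$-norm is exactly $n\norm{\langle q,\overline w\rangle}_{\R/\Z}$; hence
\[
n\,\norm{\langle q,\overline w\rangle}_{\R/\Z}\le C(\NN)(\d/K)^{-A(\NN)}.
\]
Now fix $L$ (depending only on $\NN$) large enough that $C(\NN)(\d/K)^{-A(\NN)}<\d^{-L}$ for all $\d\in(0,1/2)$, which is possible since the left-hand side is a fixed constant times $\d^{-A(\NN)}$. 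Then $\norm q<\d^{-L}$, so $q\in Q$ and therefore $\norm{\langle q,\overline w\rangle}_{\R/\Z}\ge\e_0$, whence $n\le C(\NN)(\d/K)^{-A(\NN)}/\e_0$. In other words, for every $n$ exceeding this bound the sequence $(g\exp(kw)\L)_{1\le k\le n}$ is $\d$-equidistributed with respect to $\HH^1(\NN)$, which is statement $(1)$; so one of $(1)$, $(2)$ always holds.

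The mathematical substance is carried entirely by \cite[Theorem 1.16]{GreenTao12}, so the part I expect to demand genuine care is the bookkeeping of the first paragraph: installing the correspondence $\eta\leftrightarrow q$ and the comparison $|\eta|\asymp\norm q$, checking the equivalence of the two Lipschitz norms on the compact manifold $\NN$, and verifying that the $C^\infty[n]$-norm of the affine phase $k\mapsto\eta(g)+k\langle q,\overline w\rangle$ equals $n\norm{\langle q,\overline w\rangle}_{\R/\Z}$. The conceptual mechanism, once this is set up, is simply that the failure of equidistribution at a scale $n$ produces, via Green--Tao, a vector $q$ from the \emph{fixed finite} family $Q$ with $n\norm{\langle q,\overline w\rangle}_{\R/\Z}$ bounded in terms of $\d$ alone; the positive lower bound $\e_0$ over $Q$ then caps such scales $n$, which forces eventual equidistribution and hence the dichotomy of Theorem \ref{th:GreenTao}.
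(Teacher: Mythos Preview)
Your proposal is correct and is precisely the natural way to flesh out what the paper does. The paper does not actually give a proof of this theorem: it simply declares it ``a direct corollary of a much stronger result of Green and Tao, \cite[Theorem 1.16]{GreenTao12}'' and then remarks that the full quantitative strength (control on how large $n$ must be, and the distance of $\langle q,\overline w\rangle$ to $\Z$ rather than exact membership) is not needed. Your argument is exactly the unpacking of that corollary, and the bookkeeping you flag (the $\eta\leftrightarrow q$ correspondence, the comparability $|\eta|\asymp\|q\|$, the equivalence of Lipschitz norms on the compact $\NN$, and the identification of the $C^\infty[n]$-norm of the affine phase with $n\|\langle q,\overline w\rangle\|_{\R/\Z}$) is routine and is all that is required.
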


The above result does not use the full force of \cite[Theorem 1.16]{GreenTao12} in two
significant ways.
First, the result of Green and Tao is for general polynomial sequences in place of
$g\exp(kw)$.
Second, it is possible to control in a very efficient way how large $n$ needs to be in item 1
at the expense of replacing $\langle q,\overline w\rangle\in\Z$ in item 2 by 
an upper bound on $\dist(\langle q,\overline w\rangle,\Z)$.
These features are crucial in some other applications of \cite[Theorem 1.16]{GreenTao12},
but we do not need them.

\begin{proof}[Proof of Proposition \ref{equid}]
We  suppose without loss of generality that  $\theta=1$ (see \cite[proof of Theorem 2.3]{GorSpat15}). We choose $L>0$ as in  Theorem \ref{th:GreenTao} and consider an affine rational submanifold $\Su\subseteq \NN$ as well as a constant $\delta\in (0,1/2)$.

Assume that   $\Su$ is not $\delta$-equidistributed for $\HH^1(\NN)$. Fix a vector $w\in \kh$ such that $\norm{\overline{w}}\leq \frac{1}{2}\delta^L$ and whose projection in $\kh/[\kh, \kh]$ does not belong to any translate of a $H\cap \Lambda$-rational proper subspace
by a $H\cap \Lambda$-rational vector. By 
Theorem \ref{th:GreenTao}, or in this case even by an earlier theorem of Leon Green
\cite{Green61, AusGreenHahn64},
we have  the weak-$*$ convergence:  
 \[
 \frac{1}{n}\sum_{k=1}^{n}\delta_{g\exp(kw)\Lambda}\, \longrightarrow\, m_{\Su}.
 \]
In particular, for large enough $n$, the sequence $(g\exp(kw)\Lambda)_{0\leq k\leq n}$  does not satisfy $\delta$-equidistribution for $\HH^1(\NN)$ either. Theorem \ref{th:GreenTao} yields some integer vector $q\in \Z^d\setminus \{0\}$ such that $\norm{q}< \delta^{-L}$ and $\langle q,\overline{w}\rangle\in\Z$. This forces $\langle q,\overline{w}\rangle=0$ as  our choice of $w$ guarantees that $|\langle q,\overline{w}\rangle|\leq 1/2$. As the smallest rational subspace of $\kt$ containing $\overline{w}$ is $\overline{\kh}$, we get that $\langle q, \overline{\kh}\rangle=0$, which concludes the proof.
\end{proof}

\section{Diophantine estimates}

\label{Sec3}

Let $K$ be a number field.
We denote by $M_K$ the set of its places,  by $M_{K,\infty}$ its Archimedean places
and by $M_{K,f}$ its finite places.
We use the convention that we include complex places twice (one for each complex conjugate
embedding) and we write $|x|_v$ for $x\in K$ and $v\in M_{K,\infty}$ to denote
the usual absolute value arising from the embedding of $K$
in $\R$ or $\C$ associated to $v$.
(This convention is not standard, but it is also not unprecedented, see \cite[Chapter 14]{Mas}.)
The finite places will not play much role in this paper, so we do not specify
which convention is used for inclusion in $M_{K,f}$ or for the normalization
of the corresponding absolute values.
We do insist, however, that the product formula $\prod_{v\in M_K}|x|_v=1$
holds for all $x\in K$.

The projective height of a  tuple $(a_1,\ldots,a_n)\in K^n$ is denoted by
\[
H(a_1,\ldots,a_n)=\prod_{v\in M_K} \max_j(|a_j|_v^{1/[K:\Q]}).
\]
We note that this quantity is a projective notion, that is, it is invariant under
multiplication of each coordinate by the same scalar.
This fact is an immediate consequence of the product formula.

The ring of integers in $K$ is denoted by  $\cO(K)$ and  
for $h\in\R_{\ge 0}$, we set
\[
\cO(K)_{h}=\{a\in\cO(K):|a|_v\le h\text{ for all $v\in M_{K,\infty}$}\}.
\]

We write $\cO(K)^\times$ for the group of (multiplicative) units in $\cO(K)$.
For $n\in \Z_{\ge 2}$ and $u=(u_1,\ldots,u_n)\in (\cO(K)^\times)^n$, we write
\[
\a(u)=\min_{I\subset\{1,\ldots, n\}, |I|\ge 2}H(u_i:i\in I)^{1/(|I|-1)}.
\]

In what follows, we consider a generalization of the classical unit equation
$u_1+u_2=1$ to be solved for $u_1,u_2\in \cO(K)^\times$.
This subject has a rich literature, the recent book \cite{EG}
is a good general reference.

\begin{thm}
\label{unit-eq}
Let $K$ be a number field.
For all $\e>0$ and $n\geq 2$, there is an (ineffective) constant $r>0$
such that the following holds.
Let $u=(u_1,\ldots,u_n)\in (\cO(K)^\times)^n$ and let
$a_1,\ldots,a_n\in \cO(K)_{h}$ not all $0$ for some $h\in \R_{\ge 0}$.
Suppose
\[
a_1u_1+\ldots+a_n u_n=0.
\]
Then $h\ge r\a(u)^{1-\e}$.
\end{thm}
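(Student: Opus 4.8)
The plan is to deduce Theorem \ref{unit-eq} from W. Schmidt's subspace theorem, following the strategy of Evertse \cite{Eve} but tracking the dependence on $h$. First I would reduce to the case where no proper subsum of $a_1u_1+\ldots+a_nu_n$ vanishes: if some subsum over $I\subsetneq\{1,\ldots,n\}$ is zero, apply the result inductively to that shorter equation (noting $\a(u_i:i\in I)\ge\a(u)$ after adjusting $\e$), so we may assume all subsums are nonzero. Next, normalize: since the height $H(\cdot)$ and the quantity $\a(u)$ are projective, I would divide through by $u_n$, so the equation becomes $b_1v_1+\ldots+b_{n-1}v_{n-1}=-a_n$ with $v_i=u_i/u_n\in\cO(K)^\times$ and $b_i=a_i$; more symmetrically, I would keep the homogeneous form and think of $(u_1:\cdots:u_n)$ as a point in $\mathbb P^{n-1}(K)$ lying on the hyperplane $\{a_1x_1+\ldots+a_nx_n=0\}$.

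The core step is to set up, for each place $v\in M_K$, a system of $n$ linearly independent linear forms $L_{1,v},\ldots,L_{n,v}$ in the variables $x_1,\ldots,x_n$ and estimate the product $\prod_v\prod_j |L_{j,v}(u)|_v$ from above. At a generic place one takes the coordinate forms $x_1,\ldots,x_n$; at the archimedean places (and a controlled finite set of bad finite places) one replaces one coordinate form, say $x_n$, by the form $a_1x_1+\ldots+a_nx_n$, which \emph{vanishes} at $u$. The key mechanism: because the $u_i$ are units, the product over \emph{all} places of $\max_j|u_j|_v$ telescopes via the product formula, and the contribution of the vanishing forms is bounded in terms of $h=\max_v\max_i|a_i|_v$ and the size of $a_1u_1+\ldots+a_{n}u_n=0$ — here the vanishing is exactly what makes this small rather than just bounded. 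Quantitatively, this yields a double product bounded by roughly $C(h)\cdot H(u_1,\ldots,u_n)^{-c}$ for an explicit $c>0$, against the trivial lower bound $1$ coming again from the product formula and the fact that the $L_{j,v}$ are permutations of the coordinate forms away from finitely many places. Comparing, one gets $H(u)^c\le C(h)$, i.e. a bound of the shape $\a(u)\le C'h$ on the \emph{non-degenerate} part.

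The subspace theorem enters because the above Liouville-type argument only controls points $u$ for which $\sum_v\log\prod_j|L_{j,v}(u)|_v$ is not too negative; Schmidt's theorem tells us that the exceptional points — those with $\prod_v\prod_j|L_{j,v}(u)|_v/\text{(height)}\le H(u)^{-\e}$ — lie in a finite union of proper subspaces of $K^n$, say $\bigcup_t T_t$. On each $T_t\cap\{a_1x_1+\ldots+a_nx_n=0\}$ one projects to fewer coordinates and either reduces to a lower-dimensional instance of the theorem or produces a vanishing subsum, both already handled; crucially the subspaces $T_t$ depend only on $K,n,\e$ and the \emph{choice of forms}, hence can be arranged to depend only on $K,n,\e$ and not on the particular $a_i$, which is why the final $r$ is uniform in $h$ (but ineffective, since Schmidt's theorem gives no bound on the $T_t$). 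Assembling: on the non-degenerate part we proved $\a(u)\le C'h$ hence $h\ge r\a(u)\ge r\a(u)^{1-\e}$; on the degenerate/lower-dimensional parts we apply the inductive hypothesis with a smaller $\e'$. The main obstacle I anticipate is the bookkeeping needed to make the exceptional subspaces — and the resulting inductive reductions — genuinely independent of the coefficients $a_i$, so that the single constant $r=r(K,n,\e)$ works for all $a$ simultaneously; this is precisely the point where the present statement is more uniform than \cite[Proposition 3.1]{GorSpat15}, and it is the reason Evertse's symmetric treatment of the forms, rather than a naive affine normalization, is the right starting point.
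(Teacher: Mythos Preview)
Your plan has a genuine gap at exactly the point you flag as the ``main obstacle'': making the exceptional subspaces independent of the coefficients $a_i$. In your setup you replace a coordinate form at certain places by $L=a_1x_1+\ldots+a_nx_n$. But then the family of forms fed into Schmidt's theorem depends on $a$, and so do the resulting subspaces $T_t$; you assert they ``can be arranged'' to be independent of $a$ but give no mechanism, and this is not bookkeeping --- it is the whole point. (There is also a smaller issue: since $L(u)=0$ exactly, your double product is $0$, so every $u$ trivially satisfies the subspace inequality and the only conclusion is that $u$ lies in some $T_t$ --- one of which may well be the hyperplane $\{L=0\}$ itself, yielding nothing.)

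The paper's proof resolves this by shifting the $a$-dependence from the \emph{forms} to the \emph{point}. It applies the subspace theorem not to $u$ but to $y=(a_1u_1,\ldots,a_nu_n)$, which lies in the \emph{fixed} hyperplane $V=\{x_1+\ldots+x_n=0\}$. The forms used are simply $n-1$ of the $n$ coordinate functionals restricted to $V$ (at each archimedean $v$, omit an index $j$ maximizing $|u_j|_v$), so they do not involve $a$ at all. The product formula gives $\prod_v\prod_j|\Lambda_j^{(v)}(u)|_v=H(u)^{-[K:\Q]}$, and passing from $u$ to $y$ costs at most $h^{(n-1)[K:\Q]}$; under the hypothesis $h\le\a(u)^{1-\e}$ this is still small enough to trigger the subspace theorem. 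Now the finitely many proper subspaces $U\subsetneq V$ depend only on $K,n,\e$ (and the finite combinatorics of which index is omitted at each place). Each such $U$ is cut out by some $\sum b_jx_j=0$ with the $b_j$ not all equal, whence $\sum_{j<n}(b_j-b_n)a_ju_j=0$, a shorter equation with coefficients in $\cO(K)_{Ch}$ for a constant $C$ depending on $U$ only; the induction hypothesis then gives $Ch\ge r'\a(u_1,\ldots,u_{n-1})^{1-\e}\ge r'\a(u)^{1-\e}$. This change of variables $u\mapsto y$ is the missing idea in your proposal.
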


We comment on how this result is related to the classical unit equation.
Taking $n=3$, $a_1=a_2=1$, $a_3=-1$, $u_3=1$, Theorem \ref{unit-eq} implies
that the classical unit equation has finitely many solutions, a result that
goes back to Siegel, Mahler and Lang.

Theorem \ref{unit-eq} is not new.
It could be deduced (with a slightly weaker exponent) from
a result of Evertse \cite{Eve}, see also \cite[Theorem 6.1.1]{EG}.
We give a short proof based on \cite{Eve} for the reader's convenience.

As a simple application of Dirichlet's box principle, we also show below that the
exponent in the lower bound on $h$ is almost optimal in the sense that
$1-\e$ could not be replaced by a number larger than $1$.

\subsection{The subspace theorem}

At the heart of the proof of Theorem \ref{unit-eq} is W. Schmidt's subspace theorem which we now recall. 
\begin{thm}[{\cite[Corollary 7.2.5]{BomGub07}}]\label{th:subspace}
Let $K$ be a number field and let $\{|.|_{v}, v \in M_{K}\}$ be
a system of absolute values on $K$ as above. 
Let $n\geq 2$ and let $V$ be an $n$-dimensional vector space over $K$.
For each $v\in M_{K,\infty}$, let $\Lambda_1^{(v)},\ldots,\Lambda_n^{(v)}$ be a basis
of the dual space $V^*$.
Furthermore, let $\Lambda_1^{(0)},\ldots,\Lambda_n^{(0)}$ be another basis of $V^*$.
Then for all $\e>0$, the solutions of
\[
\prod_{v\in M_{K,\infty}}\prod_{j=1}^n|\Lambda_j^{(v)}(x)|_v
\le H(\Lambda_1^{(0)}(x),\ldots,\Lambda_n^{(0)}(x))^{-\e}
\]
for $x\in V$ with $\Lambda^{(0)}_j(x)\in\cO(K)$ for all $j=1,\ldots,n$
are contained in a finite union of proper subspaces of $V$.
\end{thm}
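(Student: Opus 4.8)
The plan is to reduce Theorem~\ref{th:subspace} to the standard version of W.~Schmidt's subspace theorem found in \cite[Corollary 7.2.5]{BomGub07} (or an equivalent source). Actually, since the statement as worded is literally quoted from \cite{BomGub07}, there is nothing to prove: the theorem is a citation, and the ``proof'' should simply be a pointer to the reference, possibly with a translation of notation. So let me instead describe how I would justify this citation if one insisted on a self-contained derivation from the more classical formulation (one fixed system of linear forms, one absolute value), which is what a reader might expect.

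First I would recall the classical formulation: for a fixed number field $K$, a fixed finite set $S\supseteq M_{K,\infty}$ of places, and for each $v\in S$ a fixed basis $\Lambda_1^{(v)},\ldots,\Lambda_n^{(v)}$ of linear forms on $V\cong K^n$ with algebraic coefficients, the set of $x\in\cO(K)^n$ (or more generally $S$-integral points) satisfying
\[
\prod_{v\in S}\prod_{j=1}^n\frac{|\Lambda_j^{(v)}(x)|_v}{\|x\|_v}\le H(x)^{-n-\e}
\]
lies in finitely many proper subspaces. The passage to the form stated in Theorem~\ref{th:subspace} is then bookkeeping: one restricts $S$ to $M_{K,\infty}$ (the finite places contribute a bounded factor once the coordinates $\Lambda_j^{(0)}(x)$ are required to be algebraic integers, so they can be absorbed), one replaces the normalizing heights $\|x\|_v$ and the intrinsic height $H(x)$ by the height $H(\Lambda_1^{(0)}(x),\ldots,\Lambda_n^{(0)}(x))$ using that two systems of coordinates give comparable heights (up to a bounded multiplicative constant depending only on the change-of-basis matrix), and one adjusts $\e$ accordingly. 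The condition $\Lambda_j^{(0)}(x)\in\cO(K)$ replaces the $S$-integrality hypothesis.

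The main technical point in such a derivation is controlling the error terms when switching from the $(0)$-coordinates to an arbitrary algebraic coordinate system and back: one must check that $H(x)\asymp H(\Lambda_1^{(0)}(x),\ldots,\Lambda_n^{(0)}(x))$ with constants independent of $x$, and that the finite-place contributions $\prod_{v\in M_{K,f}}\max_j|\Lambda_j^{(v)}(x)|_v$ are bounded below by $1$ (hence harmless) once each $\Lambda_j^{(0)}(x)$ is an algebraic integer and the $\Lambda_j^{(v)}$ are taken with integral coefficients. Neither point is deep, but getting the exponent exactly as in the statement (an arbitrary $\e>0$ on the right, with no additive $n$) requires absorbing the $H(x)^{-n}$ against a small loss in $\e$, which is legitimate since we may shrink $\e$ at the outset.

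In short, the ``proof'' is: this is \cite[Corollary 7.2.5]{BomGub07}, transcribed into the notational conventions fixed above for $M_K$, $M_{K,\infty}$, $|\cdot|_v$ and $H(\cdot)$; the only thing to verify is that the conventions match (in particular the doubling of complex places, which is exactly what makes the product $\prod_{v\in M_{K,\infty}}$ behave correctly), and that finite places may be dropped under the integrality hypothesis $\Lambda_j^{(0)}(x)\in\cO(K)$. I would therefore present it simply as a cited result with a one-line remark reconciling the conventions, rather than as a theorem with an independent proof.
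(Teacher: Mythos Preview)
Your proposal is correct and matches the paper's treatment: the paper gives no independent proof of this theorem, only a one-sentence remark that using $\Lambda_1^{(0)},\ldots,\Lambda_n^{(0)}$ to identify $V$ with $K^n$ translates the statement into the literal form of \cite[Corollary 7.2.5]{BomGub07}. Your additional discussion of how one might derive this from a more classical $S$-integer formulation is extra (and not needed, since the cited corollary already has it in essentially this form), but your bottom line---treat it as a citation with a brief convention check---is exactly what the paper does.
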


The functionals $\Lambda_1^{(0)},\ldots,\Lambda_n^{(0)}$ can be used to identify
$V$ with $K^n$ and using this identification, $\L_j^{(v)}$ can be identified with
a linear form for each $j$ and $v$.
Using these identifications, Theorem \ref{th:subspace}  is translated into the form in
\cite[Corollary 7.2.5]{BomGub07}.
In our application that follows,
there is no natural choice for an identification between $V$ and $K^n$,
so the above formulation will suit our purposes best.

\subsection{Proof of Theorem \ref{unit-eq}}

The proof is by induction on $n$.
Suppose first $n=2$.
Let $u_1,u_2\in\cO(K)^\times$ and $a_1,a_2\in\cO(K)_{h}$ for some $h\in\R_{\ge 0}$ such that  $a_{1}, a_{2}\neq 0$ and
$a_1u_1+a_2u_2=0$.
We observe that
\[
(u_1,u_2)=\big(\frac{u_1}{a_2}a_2,-\frac{u_1}{a_2}a_1\big),
\]
hence
\[
H(u_1,u_2)=H(a_1,a_2)\le (h^{1/[K:\Q]})^{[K:\Q]}=h.
\]
Since, $\a(u_1,u_2)=H(u_1,u_2)$, this proves our claim with $r=1$.

Now suppose $n\ge 3$, and that the claim holds for all smaller values of $n$. 
Let $V$ be the subspace of $K^n$ of those points that satisfy the
equation $x_1+\ldots+x_n=0$.
Observe that
\[
y:=(a_1u_1,\ldots,a_nu_n)\in V.
\]

For each place $v\in M_{K,\infty}$, let $\Lambda_1^{(v)},\ldots,\Lambda_{n-1}^{(v)}$
be an enumeration of all but one of the functionals $(x_1,\ldots,x_n)\mapsto x_j$
so that we omit one of the indices $j$ for which
$|u_j|_v$ is maximal.
We also set $\Lambda_j^{(0)}(x)=x_j$, say, for $j=1,\ldots,n-1$.
Observe that by the product formula
\[
\prod_{v\in M_{K,\infty}}\prod_{j=1}^{n-1} |\Lambda_j^{(v)}(u_1,\ldots, u_n)|_{v}
=\prod_{v\in M_{K,\infty}}\frac{\prod_{j=1}^{n}|u_j|_v }{\max_j(|u_j|_v)}
=H(u_1,\ldots,u_n)^{-[K:\Q]}.
\]

Using $a_1,\ldots,a_n\in\cO(K)_{h}$ and the definition of $\Lambda_j^{(v)}$, we have
\[
|\Lambda_j^{(v)}(a_1u_1,\ldots,a_nu_n)|_{v}\le h |\Lambda_j^{(v)}(u_1,\ldots,u_n)|_{v}
\]
for all $j$ and $v$.
Therefore,
\[
\prod_{v\in M_{K,\infty}}\prod_{j=1}^{n-1}|\Lambda_j^{(v)}(y)|_v
\le h^{(n-1)[K:\Q]} H(u)^{-[K:\Q]}.
\]

We assume $h\le \a(u)^{1-\e}$ as we may, for otherwise the claim is trivial.
Then $h\le H(u)^{(1-\e)/(n-1)}$, and 
we observe that
\begin{align*}
H(\Lambda_1^{(0)}(y),\ldots,\Lambda_{n-1}^{(0)}(y))
=&H(a_1u_1,\ldots,a_{n-1}u_{n-1})\\
\le& h H(u_1,\ldots,u_{n-1})
\le h H(u)
\le H(u)^2.
\end{align*}
Thus,
\[
\prod_{v\in M_{K,\infty}}\prod_{j=1}^{n-1}|\Lambda_j^{(v)}(y)|_v
\le H(u)^{-\e[K:\Q]}\le H(\Lambda_1^{(0)}(y),\ldots,\Lambda_{n-1}^{(0)}(y))^{-\e[K:\Q]/2}.
\]

Therefore, the subspace theorem applies and it follows that $(a_1u_1,\ldots, a_nu_n)$
is contained in finitely many proper subspaces of $V$.
In what follows, we use the induction hypothesis to show that for any proper subspace
$U$ of $V$, there is a constant $r=r(U)>0$ such that $h\ge r \a(u)^{1-\e}$ holds
whenever there are $a_1,\ldots,a_n$ and $u_1,\ldots, u_n$ that satisfy the assumptions
in the proposition with $y\in U$.
We can then take the minimum of $r(U)$ over all subspaces $U$ that arise through the
application of the subspace theorem taking into account all possible choices of the
functionals $\Lambda_j^{(v)}$, of which there are finitely many.
Then the claim will hold with this minimum in the role of $r$.

Let now $U$ be a proper subspace of $V$ with $y\in U$,
and let $b_1,\ldots,b_n\in \cO(K)$ be such that
\[
b_1x_1+\ldots+b_nx_n=0
\]
for all $(x_1,\ldots, x_n)\in U$ and not all $b_j$ are equal.
We also suppose that $a_j\neq0$ for all $j$, as we may, for otherwise we may omit the
$0$ coordinates, and the induction hypothesis applies directly.
We now observe that
\[
(b_1-b_n)a_1u_1+\ldots+(b_{n-1}-b_n)a_{n-1}u_{n-1}=0
\]
and that not all of the coefficients $(b_j-b_n)a_j$ vanish.

We also note that $(b_j-b_n)a_j\in \cO(K)_{Ch}$ for some constant $C$
depending on the $b_j$'s.
Therefore, we are in a position to apply the induction hypothesis
and conclude
\[
Ch\ge c\a(u_1,\dots,u_{n-1})^{1-\e}\ge r\a(u)^{1-\e},
\]
and this completes the proof.

\subsection{Optimality of the exponent}

We now prove the claimed optimality of Theorem \ref{unit-eq}.

\begin{prp}
Let notation be as in Theorem \ref{unit-eq}.
Then there is a constant $R>0$
such that for all 
$u=(u_1,\ldots,u_n)\in (\cO(K)^\times)^n$ there exist 
$a_1,\ldots,a_n\in \cO(K)_{h}$ not all $0$ such that
\[
a_1u_1+\ldots+a_n u_n=0
\]
and $h\le R\a(u)$.
\end{prp}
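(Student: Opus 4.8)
The plan is to construct the $a_i$ directly using a pigeonhole argument applied to the values of the units $u_i$ at the archimedean places, exactly in the spirit of Dirichlet's box principle as advertised before the statement. Write $K=\Q(\xi)$ and fix an integral basis $\omega_1,\ldots,\omega_d$ of $\cO(K)$ with $d=[K:\Q]$. Let $r_1,r_2$ be the numbers of real and complex places (so $d=r_1+2r_2$), and set $s=r_1+r_2$; by Dirichlet's unit theorem $\cO(K)^\times$ has rank $s-1$. The key invariant to control is $H(u_1,\ldots,u_n)$: since the $u_i$ are units, $\prod_{v\in M_{K,f}}\max_j|u_j|_v=1$ is not automatic, but one does have $\prod_v|u_i|_v=1$ for each $i$, and I would first reduce to the archimedean places by noting $H(u)=\prod_{v\in M_{K,\infty}}\max_j|u_j|_v^{1/d}$ up to a bounded factor — actually one must be slightly careful here and instead work projectively: after dividing all $u_i$ by $u_1$ (which does not change $H(u_i:i\in I)$ for $I\ni 1$, nor $\alpha(u)$ up to adjusting which subsets are taken), one reduces to the case where the finite-place contributions are controlled.

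The core step is the counting argument. Consider the map $\Phi\colon \cO(K)^n\to \prod_{v\in M_{K,\infty}} K_v$ sending $(a_1,\ldots,a_n)\mapsto \big(\sum_i a_i u_i\big)_{v\in M_{K,\infty}}$, a $\Z$-linear map of a lattice of rank $nd$ into a real vector space of dimension $d$. I would restrict to the box $B_T=\{(a_1,\ldots,a_n): |a_i|_v\le T \text{ for all } v\in M_{K,\infty}\}$; writing each $a_i=\sum_k c_{ik}\omega_k$ with $c_{ik}\in\Z$, the condition $(a_i)\in B_T$ is implied by $|c_{ik}|\le T'$ for a suitable $T'\asymp T$, giving $\gg (T')^{nd}$ lattice points. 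Their images under $\Phi$ all lie in a box of $\prod_v K_v\cong\R^d$ of sidelength $\asymp T\cdot\max_v\max_i|u_i|_v$. Partitioning that target box into $\asymp (T^{\,(n-1)/n})^{d}$ sub-boxes of appropriate size, pigeonhole produces two distinct points $(a_i'),(a_i'')\in B_T\cap\cO(K)^n$ with $\Phi(a')=\Phi(a'')$ living in the same small box provided $T$ is chosen so that $(T')^{nd}> (\text{number of boxes})$; taking the difference $a_i=a_i'-a_i''$ yields a nonzero integral vector with $a_1u_1+\cdots+a_nu_n$ having all archimedean absolute values at most $\delta$ for a prescribed small $\delta$. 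Choosing $T\asymp (\max_v\max_i|u_i|_v)^{1/(n-1)}\asymp H(u)^{1/(n-1)}$ (again after the projective normalization, and estimating $\max_v\max_i|u_i|_v$ in terms of $H(u)$) makes this sum, an algebraic integer, have every archimedean absolute value less than $1$ when the implied constants are tuned, hence it is $0$; and simultaneously $h:=\max_v\max_i|a_i|_v\lesssim T\lesssim H(u)^{1/(n-1)}$.

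Finally I would match this against $\alpha(u)$. By definition $\alpha(u)\le H(u_1,\ldots,u_n)^{1/(n-1)}$ (taking $I=\{1,\ldots,n\}$), so the bound $h\le R\,H(u)^{1/(n-1)}$ is weaker than what is claimed, namely $h\le R\,\alpha(u)$; to get the sharper statement one applies the pigeonhole construction not to the full tuple but to the subset $I$ realizing the minimum in the definition of $\alpha(u)$, obtaining $a_i$ supported on $I$ (and $a_i=0$ for $i\notin I$) with $h\lesssim H(u_i:i\in I)^{1/(|I|-1)}=\alpha(u)$. I expect the main technical obstacle to be the bookkeeping around the finite places and the passage from ``$H(u)$'' to ``$\max_v\max_i|u_i|_v$'': one must use the product formula together with the fact that each $u_i$ is a unit (so $|u_i|_v\le 1$ at all but finitely many finite places, and the finite-place contributions to $H$ are pinned down) to ensure that the archimedean box in which the counting takes place really has volume comparable to a power of $H(u)$, uniformly in $u$. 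Everything else — the choice of sidelengths, the count of sub-boxes, the conclusion that a small algebraic integer vanishes — is routine once that comparison is in place.
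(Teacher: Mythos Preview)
Your approach is essentially the paper's: restrict to the subset $I$ realising $\a(u)$, then apply a pigeonhole count on the values $\sum a_i u_i\in\cO(K)$ for $a_i\in\cO(K)_h$. Two points of comparison are worth making.

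First, your worry about finite places is misplaced. Since each $u_i$ is a unit, $|u_i|_v=1$ at every non-archimedean $v$, so $H(u)^{[K:\Q]}=\prod_{v\in M_{K,\infty}}\max_i|u_i|_v$ exactly; there is nothing to control there. The genuine issue hiding behind your remark that ``$\max_v\max_i|u_i|_v\asymp H(u)$'' (which is false in general) is that the target box in $\prod_v K_v$ can be extremely elongated: at a place where all $|u_i|_v$ are tiny the sidelength $T\max_i|u_i|_v$ may be far below the lattice spacing, and then the lattice-point count is not governed by the volume. The paper handles this cleanly by invoking Dirichlet's unit theorem to choose $\lambda\in\cO(K)^\times$ with $\max_i|\lambda u_i|_v\le C_0 H(u)$ for \emph{every} archimedean $v$; after this renormalisation the image lands in the round set $\cO(K)_{C_1 h H(u)}$, whose cardinality is $\asymp (hH(u))^{[K:\Q]}$, and the pigeonhole is immediate. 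Your division by $u_1$ is a legitimate alternative (it forces $\max_i|u_i|_v\ge 1$ at every $v$, so no sidelength falls below $nT$), but the reason you give for it is not the right one.

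Second, a cosmetic simplification: the paper counts \emph{exact} collisions of $\sum a_i u_i$ in $\cO(K)$ and subtracts, rather than partitioning into sub-boxes and invoking ``an algebraic integer with all conjugates $<1$ vanishes''. Both work; the former is shorter.
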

\begin{proof}
We assume, as we may, that $\a(u)=H(u)^{1/(n-1)}$, for otherwise
we can omit some coordinates from $u$ so that the identity holds.
	
By Dirichlet's unit theorem, there exists  $\lambda \in \cO(K)^\times$ such that
\[
\max(|\lambda u_1|_v,\ldots,|\lambda u_n|_v)\le C_{0}H(u)
\]
for all $v\in M_{K,\infty}$ and some $C_{0}>0$ depending only on $K$. Up  to replacing $u$ by  $\lambda u$, which does not affect the height, we may assume $\lambda=1$. It follows that for all $a_1,\ldots,a_n\in \cO(K)_{h}$, 
\[
a_1u_1+\ldots+a_n u_n\in \cO(K)_{C_{1}hH(u)}
\]
where $C_{1}=nC_{0}$.

We observe that
\[
c_2 h^{[K:\Q]}\le |\cO(K)_h|\le C_2 h^{[K:\Q]}
\]
for some constants $c_2,C_2>0$ depending only on $K$
provided $h$ is sufficiently large.
This means that there are at least $c_2^n (h/2)^{n[K:\Q]}$ choices
for $a_1,\ldots,a_n\in \cO(K)_{h/2}$ and
there are at most $C_2(C_{1}hH(u))^{[K:\Q]}$ possible values for
$a_1 u_1+\ldots+a_n u_n$.

Now we take $h=R \,H(u)^{1/(n-1)}$ for a suitably large constant $R$ so that
\[
c_{2}^n (h/2)^{n[K:\Q]}>C_2(C_{1}hH(u))^{[K:\Q]}.
\]
Dirichlet's box principle implies that there are
$b, \wt b \in (\cO(K)_{h/2})^n$ 
such that $b\neq \wt b$ and
\[
b_1u_1+\ldots+b_n u_n=\wt b_1 u_1+\ldots+\wt b_n u_n.
\]
 The claim follows by taking $a_j=b_j-\wt b_j$.
\end{proof}

\section{Growth of eigencharacters}

\label{Sec1}

Let $\alpha:\Z^l \rightarrow \Aut(\NN)$ be a morphism from $\Z^l$ to the group of automorphisms of a compact nilmanifold $\NN=G/\Lambda$. We assume that $\alpha(z)$ acts ergodically on $\NN$ for every $z\neq0$ and estimate the growth of the  eigencharacters of $\alpha$ acting on the abelianized Lie algebra of $G$. 

 As in Section \ref{Sec2}, we set $\kt=\kg/[\kg,\kg]$ and identify $\kt$ with some $\R^d$ so that the projection $Z_{\Lambda}$ of $\log \Lambda$ in $\kt$ corresponds to $\Z^d$. The representation $\alpha$ induces  by differentiation a representation of $\Z^l$ on $\kt$ which preserves $Z_{\Lambda}$. We call it $d_\kt\alpha : \Z^l\rightarrow \GL^{\pm1}_{d}(\Z)$, where $\GL^{\pm1}_{d}(\Z)$ denotes the set of  $d\times d$-matrices with coefficients in $\Z$ and determinant $\pm1$. As $\Z^l$ is  abelian, such a representation is triangularizable over $\QQ$ : we can decompose $\QQ^d$ as a direct sum of subrepresentations 
\[
\QQ^d=\oplus_{\chi \in \mathscr{X}}L_{\chi}
\] 
where $\mathscr{X}$ is a (finite) set of characters $\chi  : \Z^l \rightarrow \QQ^\times$, and for each $\chi \in \mathscr{X}$,  
the generalized eigenspace 
\[
L_{\chi}:=\{v\in \QQ^d \,|\, \forall z\in \Z^l, \,\left(d_{\kt}\alpha(z)-\chi(z) \text{Id}\right)^d(v)=0 \} 
\]
is nonzero.  
Moreover, the Galois group $\G=\text{Gal}(\QQ/\Q)$ acting coordinate-wise on  $\QQ^d$ permutes these eigenspaces according to the formula $\sigma . L_{\chi}=L_{\sigma \circ \chi}$. 

\bigskip
The assumption of ergodicity on $\alpha$ can be reformulated as follows. 
\begin{lemma}[Exponential growth of eigencharacters] \label{exp}
	
	There exists $c>0$ such that for every $\chi_{0} \in \mathscr{X}$, and for every $z\in \Z^l\smallsetminus \{0\}$, 
	\[
	\max_{\chi \in \G.\chi_{0}}|\chi (z)|\geq e^{c\norm{z}}.
	\]
\end{lemma}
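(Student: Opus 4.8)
The plan is to translate the ergodicity hypothesis into a statement about the eigencharacters and then to exploit the fact that the $\G$-orbit of a character records all the Galois conjugates of an algebraic number, whose product is controlled by the product formula. First I would fix a nonzero $z\in\Z^l$ and set $A=d_\kt\alpha(z)\in\GL_d(\Z)$. Ergodicity of $\alpha(z)$ on $\NN$ is equivalent to ergodicity of the induced toral automorphism on $\kt/Z_\Lambda\equiv\R^d/\Z^d$ (passing to the maximal torus quotient of $\NN$ detects ergodicity, since an automorphism of a nilmanifold is ergodic iff the induced automorphism of the maximal torus factor is, by the standard Fourier-analytic argument going back to Parry), and this in turn is equivalent to the statement that $A$ has no eigenvalue that is a root of unity. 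Equivalently, for every $k\ge 1$, the matrix $A^k$ has no eigenvalue equal to $1$, i.e.\ $\det(A^k-\mathrm{Id})\ne 0$ for all $k\ge 1$; but more to the point, no eigenvalue $\chi(z)$ of $A$ lies on the unit circle \emph{and} is a root of unity.

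The key point is a Kronecker-type argument. For a fixed $\chi_0\in\mathscr X$, the value $\lambda:=\chi_0(z)$ is an algebraic number, and its full set of Galois conjugates is $\{\sigma(\lambda):\sigma\in\G\}=\{\chi(z):\chi\in\G.\chi_0\}$, since $\G$ acts on $\mathscr X$ by $\sigma.L_\chi=L_{\sigma\circ\chi}$. Suppose toward a contradiction that $\max_{\chi\in\G.\chi_0}|\chi(z)|$ is small, say all conjugates of $\lambda$ have modulus $\le 1+\epsilon$. Now $\lambda$ is an \emph{algebraic integer}: it is an eigenvalue of the integer matrix $A$, hence a root of the monic integer polynomial $\det(t\,\mathrm{Id}-A)$. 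By Kronecker's theorem, an algebraic integer all of whose conjugates lie in the closed unit disc is either $0$ or a root of unity; $0$ is excluded since $\chi\colon\Z^l\to\QQ\setminus\{0\}$, so if all conjugates had modulus $\le 1$ then $\lambda$ would be a root of unity, contradicting ergodicity of $\alpha(z)$. To upgrade ``$\le 1$'' to the quantitative ``$\ge e^{c\|z\|}$'' I would use that $\lambda^{-1}$ is also an algebraic integer (as $A\in\GL_d(\Z)$, so $\det A=\pm1$ and $A^{-1}\in\mathrm{Mat}_d(\Z)$), so one may instead look at the maximum modulus among conjugates of $\lambda$ and of $\lambda^{-1}$; combining, $\prod_{\sigma}\max(1,|\sigma(\lambda)|)\ge 1$ always, and this product is bounded above by $(\max_\chi|\chi(z)|)^{|\G.\chi_0|}\le(\max_\chi|\chi(z)|)^d$. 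The genuinely quantitative input is a lower bound on the Mahler measure / house of $\lambda$ that is at worst polynomial in the degree and does not degenerate: since $\lambda$ is not a root of unity, by a Dobrowolski-type (or even the cruder fact that the house of a non-cyclotomic algebraic integer of degree $\le d$ is bounded below by a constant $c_1(d)>0$) one gets $\max_\chi|\chi(z)|\ge 1+c_1$, a bound \emph{uniform in $z$} because the degree of $\lambda$ over $\Q$ is always $\le d$.

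Finally I would bootstrap the uniform multiplicative gap into the exponential bound $e^{c\|z\|}$. The trick is that if $\|z\|$ is large then $z$ itself is ``large'', and I want to replace $z$ by $z$ rather than a multiple, so instead I should argue directly: pick the eigencharacter $\chi$ with $|\chi(z)|=\max$, and consider the real number $\log|\chi(z)|=\langle v_\chi, z\rangle$ where $v_\chi\in\R^l$ is the logarithmic weight vector of $\chi$ (using $|\chi(z)|=e^{\langle v_\chi,z\rangle}$ for a linear functional, valid after noting $\chi$ is a character of $\Z^l$ so extends to $\R^l$). The set of weight vectors $\{v_\chi:\chi\in\mathscr X\}$ spans $\R^l$ — if it did not, there would be a nonzero $z_0\in\Z^l$ with $\langle v_\chi,z_0\rangle=0$ for all $\chi$, meaning every $|\chi(z_0)|=1$, forcing (via Kronecker as above) all $\chi(z_0)$ to be roots of unity and hence $\alpha(Nz_0)$ non-ergodic for suitable $N\ge1$, a contradiction. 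Since the $v_\chi$ span, there is $\rho>0$ with $\max_\chi|\langle v_\chi,z\rangle|\ge\rho\|z\|$ for all $z$; the $\max$ here is over $\chi\in\mathscr X$, but since $\G$ permutes $\mathscr X$ transitively on $\G$-orbits and $\mathscr X$ is a finite union of such orbits, after possibly shrinking $\rho$ one gets the same spanning/bounded-below property restricted to a single orbit $\G.\chi_0$ — this needs the observation that within one orbit the weight vectors are the Galois translates and still span the $\Q$-span of the whole orbit, which equals $\R^l$ by irreducibility considerations; one then has to combine this linear lower bound with the uniform multiplicative gap near $\|z\|$ small to cover all $z$ at once, taking $c=\min(\rho,\, c_1/D)$ for an appropriate $D$. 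I expect the main obstacle to be precisely this last step: showing that the maximum is achieved (up to a uniform loss) \emph{within a single Galois orbit} $\G.\chi_0$ rather than over all of $\mathscr X$, i.e.\ that no single $\G$-orbit of characters can be ``directionally degenerate''; this should follow from the fact that if some orbit $\G.\chi_0$ had all its weight vectors in a proper subspace $W\subsetneq\R^l$, then choosing $z\in\Z^l$ in $W^\perp$ nonzero would make all conjugates of $\chi_0(z)$ have modulus $1$, hence $\chi_0(z)$ a root of unity, hence $\alpha(Nz)$ non-ergodic, the desired contradiction.
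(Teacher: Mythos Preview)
Your overall strategy coincides with the paper's: introduce the log-weight map $\psi:z\mapsto(\log|\chi(z)|)_{\chi\in\G.\chi_0}$, use Kronecker's theorem together with the ergodicity hypothesis to rule out $\psi(z)=0$ for nonzero integer $z$, and then conclude that the $\R$-linear extension $\overline\psi$ is injective, which gives the linear lower bound $\|\overline\psi(z)\|\ge c\|z\|$ and hence the lemma. Your identification of the Galois orbit with the full set of archimedean conjugates of $\chi_0(z)$, and the use of Kronecker, are exactly right.

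There is, however, a genuine gap in the step where you pass from ``the $v_\chi$ do not span $\R^l$'' to ``there exists a nonzero $z\in\Z^l\cap W^\perp$''. The orthogonal complement $W^\perp$ is cut out by the linear equations $\langle v_\chi,\cdot\rangle=0$, and the $v_\chi$ have transcendental entries (logarithms of algebraic numbers), so there is no reason a priori for $W^\perp$ to contain a nonzero lattice point. The same issue arises in your earlier argument that all the $v_\chi$ (over $\chi\in\mathscr X$) span.

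The paper closes this gap differently: it shows directly that $\psi(\Z^l)$ is \emph{discrete} by arguing that if $\psi(z_i)\to0$ with $z_i\ne0$, then the integer polynomials $\prod_{\chi\in\G.\chi_0}(X-\chi(z_i))$ have bounded coefficients, hence lie in a finite set, so some nonzero $z$ has $\psi(z)=0$ exactly, and then Kronecker applies. Discreteness plus injectivity of $\psi$ on $\Z^l$ forces the image to be a rank-$l$ lattice, hence $\overline\psi$ is injective on $\R^l$. You can also repair your argument with your own tools: your Dobrowolski-type observation that $\max_{\chi\in\G.\chi_0}|\chi(z)|\ge 1+c_1(d)$ for every nonzero $z\in\Z^l$ already says that $0$ is isolated in the subgroup $\psi(\Z^l)$, hence $\psi(\Z^l)$ is discrete, and the same rank argument then gives $W=\R^l$. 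Once you have this, the ``combining with the multiplicative gap near small $\|z\|$'' step is unnecessary: injectivity of $\overline\psi$ on $\R^l$ immediately yields $\max_\chi\log|\chi(z)|\ge\rho\|z\|$ for all $z$.
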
  

\begin{proof} We recall the proof given in  \cite[Lemma 2.1]{GorSpat15}. Let $\chi_{0}\in \mathscr{X}$. We claim that the morphism of $\Z$-modules  
	\[
	\psi : \Z^l\rightarrow \R^{|\G.\chi_{0}|}, \,z\mapsto (\log |\chi(z)|)_{\chi\in \G.\chi_{0}} 
	\]
	is injective with discrete image. Its  $\R$-linear extension $\overline{\psi} : \R^l\rightarrow \R^{|\G.\chi_{0}|}$  is thus  injective, whence satisifies 
	$\norm{\overline{\psi}(z)} \geq c_{\chi_{0}}\norm{z} $ for some $c_{\chi_{0}}>0$ and every $z\in \Z^l$. The lemma follows by finiteness of $\mathscr{X}$. 
	
	To prove the claim, we argue by contradiction assuming the existence of a sequence $(z_{i})\in (\Z^l)^\N$ of non-zero vectors such that $\psi(z_{i})\rightarrow 0$. This means that $|\chi(z_{i})|\rightarrow 1$ for every $\chi\in \G.\chi_{0}$. In particular, the associated minimal polynomials $P_{i}=\prod_{\chi\in \G.\chi_{0}}(X-\chi(z_{i}))\in \Z[X]$ have bounded coefficients, hence belong to a finite subset of $\Z[X]$. Necessarily, the sequence of vectors $(\chi(z_{i}))_{\chi\in \G.\chi_{0}}$ has a constant subsequence,  yielding some $z\in \Z^l\smallsetminus 0$ such that $|\chi(z)|=1$ for every $\chi\in \G.\chi_{0}$. By Kronecker's theorem (see e.g. \cite[Theorem 1.5.9]{BomGub07}), $\chi_{0}(z)$ is a root of unity. This contradicts the ergodicity of the toral automorphism  induced by $\alpha(z)$ on $G/[G,G]\Lambda$, whence that of $\alpha(z)$ on $G/\Lambda$.

\end{proof}

\section{Proof of exponential $n$-mixing}
\label{Sec4}

We now proceed to the proof of Theorem \ref{TH}. 
Fix $n \geq 2$, $\theta \in (0, 1]$. Let  $\z=(z_{1}, \dots, z_{n}) \in (\Z^l)^n$ and consider some $\theta$-H\"older functions  $f_{1}, \dots, f_{n}\in \HH^{\theta}(\NN)$ as well as translation parameters $g_{1}, \dots, g_{n}\in G$. We begin with two observations:

\begin{itemize}
\item The integral at study can be rewritten as
\begin{align*}
\int_{\NN}\prod_{i=1}^nf_{i}(g_{i}\alpha(z_{i})x)\,dm(x)=\int_{\Su}f_{1}\otimes \dots\otimes f_{n} \,dm_{\Su}
\end{align*}
where $\Su$ is an affine rational submanifold of $\NN^n$, defined as the image of the embedding 
\[
\NN\rightarrow \NN^n, \,x\mapsto (g_{1}\alpha(z_{1})x, \dots, g_{n}\alpha(z_{n})x).
\]

\item Equiping $\NN^n$ with the product Riemannian metric on $\NN$, we have  
\[
\norm{f_{1}\otimes \dots\otimes f_{n}}_{\HH^{\theta}}\leq \norm{f_{1}}_{\HH^{\theta}}\dots \norm{f_{n}}_{\HH^{\theta}}.
\]

\end{itemize}

These observations reduce \Cref{TH} to showing that the submanifold $\Su$ is $\delta$-equidistributed with respect to $\HH^{\theta}(\NN^n)$ for some $\delta<\frac{C}{N(\z)^\eta}$ where $N(\z)=\exp(\min_{1\leq i\neq j\leq n}\norm{z_{i}-z_{j}})$ and $C, \eta>0$ are constants allowed to depend on the initial data $(G,\Lambda, \alpha, n, \theta)$ or possibly related structures, but not on $\z$ nor the $g_{i}$'s.

Denote by  $L>0$ the constant associated to the product nilmanifold $\NN^n$ and $\theta$ as in \Cref{equid}. Let $\delta\in (0, 1/2)$ be such that $\Su$ is not $\delta$-equidistributed in $\NN^n$. By   \Cref{equid}, there exists $q=(q_{1}, \dots, q_{n})\in (\Z^d)^n$ such that 
\[
0< \norm{q}<  \left(\frac{1}{\delta}\right)^L  \,\,\,\text{ and }\,\,\, \langle q, d_{\kt}\alpha(\z)\kt\rangle=0
\]
where for $\omega\in \kt$, we set $d_{\kt}\alpha(\z)\omega=(d_{\kt}\alpha(z_{1})\omega, \dots, d_{\kt}\alpha(z_{n})\omega)\in \kt^n$,
and recall that $d_{\kt}\a(z)$ is the projection of the differential of $\a(z)$ to $\kt$.
Taking $\QQ$-linear combinations of the above equality, we get for every $\omega\in \QQ^d$,
\begin{align}\label{null}
\langle q, d_{\kt}\alpha(\z)\omega\rangle=0.
\end{align}
Let us choose once and for all a basis $\beta_{\chi}$ of each generalized eigenspace $L_{\chi}$ for $\chi \in \mathscr{X}$ whose vectors have algebraic integer coefficients and such that $d_{\kt}\alpha(\Z^l)$ is represented by upper-triangular matrices.  As the $L_{\chi}$'s span $\QQ^d$ and $q$ is non-zero, there must exist  $\chi_{0}\in \mathscr{X}$ such that $ \langle q_{i}, L_{\chi_{0}}\rangle \neq \{0\}$ for some $i\in \{1, \dots ,n\}$. We let $\omega_{0}$ be the first element of the basis $\beta_{\chi_{0}}$ such that $ \langle q_{i}, \omega_{0}\rangle \neq 0$ for some $i$. We can then write for every $i$ 
\[
 \langle q_{i}, d_{\kt}\alpha(z_{i})\omega_{0}\rangle =\chi_{0}(z_{i}) \langle q_{i} , \omega_{0}\rangle
\]
and equation \eqref{null} with $\omega=\omega_{0}$ yields 
\[
\sum_{i=1}^n\chi_{0}(z_{i}) \langle q_{i}, \omega_{0}\rangle=0.
\]

We let $K$ be the number field  generated by the coefficients of the vectors belonging to the  basis $(\beta_{\chi})_{\chi \in \mathscr{X}}$.
For every $i$, we then have $\langle q_{i}, \omega_{0}\rangle\in \cO(K)$  and $\chi_{0}(z_{i})\in \cO(K)^\times$ (eigenvalues of the matrix $d_{\kt}\alpha(z_{i})\in\GL_{d}^{\pm1}(\Z)$ are algebraic units, and the relation $d_{\kt}\alpha(z_{i})\omega_{0}=\chi_{0}(z_{i})w_{0}$ forces $\chi_{0}(z_{i})$ to be in $K$). Fix $\eps\in (0,1)$ arbitrarily. \Cref{unit-eq} yields a constant $r>0$ depending only on $(K, n, \eps)$ such that, for some $i_{0}\in \{1, \dots, n\}$, and some Galois automorphism $\sigma\in \G$,
\begin{align} \label{ineq}
|\langle q_{i_{0}}, \sigma(\omega_{0})\rangle|\geq r \alpha(u)^{1-\eps}
\end{align}
where $u=(\chi_{0}(z_{1}), \dots, \chi_{0}(z_{n}))$. The exponential growth of eigencharacters presented in \Cref{exp} yields a lower bound on $\alpha(u)$:
\begin{align*}
\alpha(u)\,&\geq \min_{1\leq i\neq j\leq n}\left(H(\chi_{0}(z_{i}), \chi_{0}(z_{j}))\right)^{1/(n-1)}\\
&=\min_{1\leq i\neq j\leq n} \prod_{v\in M_{K, \infty}} \max (|\chi_{0}(z_{i}-z_{j})|_{v}, 1)^\frac{1}{(n-1)[K:\Q]}\\
&\geq \min_{1\leq i\neq j\leq n} \exp{\left(\frac{c \norm{z_{i}-z_{j}}}{(n-1)[K:\Q]}\right)}.
\end{align*}

Plugging this lower bound into the inequality (\ref{ineq}), and using the Cauchy-Schwarz inequality, we get 
\[
\norm{q}\geq r c_{1}   \exp{\left(\frac{c(1-\eps)}{(n-1)[K:\Q]}  \min_{1\leq i\neq j\leq n}\norm{z_{i}-z_{j}}\right)}
\] 
where $c_{1}=\min_{\sigma\in \G}\,\norm{\sigma(\omega_{0})}^{-1}$ only depends on our choice of $(\beta_{\chi})_{\chi\in \mathscr{X}}$. Recalling that $\delta^{-L}> \norm{q}$, we obtain that 
\[
\delta<   r c_{1} N(\z)^{-\eta} \,\,\,\,\,\text{ where }\,\,\,\,\, \eta:=\frac{c(1-\eps)}{(n-1)[K:\Q]L}.
\]

To sum up the above discussion, we have  proven that for any $\z\in (\Z^l)^n$ satisfying $r c_{1} N(\z)^{-\eta} <1/2$, the submanifold $\Su$ of $\NN^n$ is $r c_{1} N(\z)^{-\eta}$-equidistributed for $\HH^{\theta}(\NN^n)$. It follows that for all $\z\in (\Z^l)^n$, all $f_{1}, \dots, f_{n}\in \HH^{\theta}(\NN)$, $g_{1}, \dots, g_{n}\in G$, we have 
\[
\Big|\int_{\NN}\prod_{i=1}^n f_{i}(g_{i}\alpha(z_{i})x) \,dm(x)\,\, -\,\, \prod_{i=1}^n \int_{\NN}f_{i}(x) \,dm(x)\Big|   \,\,\leq \,\, \frac{4rc_{1}}{ N(\z)^{\eta}} \prod_{i=1}^n \norm{f_{i}}_{\HH^\theta}
\]
and this concludes the proof. 
(The constant $4$ on the right is needed to get a
valid statement when $r c_{1} N(\z)^{-\eta} \ge 1/2$.)

\bibliographystyle{abbrv}

\bibliography{bibliography}

\bigskip
\noindent\textsc{Centre for Mathematical Sciences, Wilberforce Road, Cambridge CB3 0WB, UK}

\bigskip

\noindent\textit{Email address}, T. B\'enard: \texttt{tb723@cam.ac.uk}, P. P. Varj\'u: \texttt{pv270@dpmms.cam.ac.uk}

\end{document}